%
%
%
%
%

%
%
\documentclass[letterpaper,]{article}
\usepackage{etex}
\usepackage{graphicx}
\usepackage{amssymb}
\usepackage{amsmath}
\usepackage{amsthm}
\usepackage{pgf,tikz}
\usetikzlibrary{arrows}
\usepackage{dcpic}
\usepackage{pictex}
\usepackage{anysize}		
\marginsize{3 cm}{3 cm}{2.5 cm}{2.5 cm}
%
%
%
\newcommand{\Q}{\mathbb Q}
\newcommand{\QQ}{\overline{\Q}}
\newcommand{\Z}{\mathbb{Z}}
\newcommand{\PP}{\mathbb P}
\newcommand{\lra}{\longrightarrow}
\newcommand{\Gal}{\mathrm{Gal}}
\newcommand{\rd}{\sqrt[3]{2}}
\newcommand{\puntoblanco}[1]{
\draw [color=black] #1 circle (1.5pt);
\fill[color=white] #1 circle (1.3pt);
}
\newcommand{\puntonegro}[1]{
\fill [color=black] #1 circle (1.5pt);
}
\newcommand{\Fd}{\widehat{F}_2}
\newcommand{\cor}{\mathrm{core}}
\newcommand{\h}{{\widetilde{H}}}
\newcommand{\D}{\widetilde{\mathcal{H}}}
\newcommand{\zn}[1]{\Z/#1\Z}
\newcommand{\F}{\mathbb F}
\newcommand{\mat}[4]{\left( \begin{array}{cc}
#1 & #2 \\ #3 & #4
\end{array}\right)}
\newcommand{\HH}{\mathbb{H}}
\newcommand{\wt}[1]{\widetilde{#1}}

\newcommand{\mF}{\mathcal{F}}
\newcommand{\mE}{\mathcal{E}}
\newcommand{\mH}{\mathcal{H}}
\newcommand{\wmF}{\widetilde{\mathcal{F}}}
\newcommand{\wF}{\widetilde{F}}
\newcommand{\mB}{\mathcal{B}}
\newcommand{\Div}{\mathrm{Div}}
\newcommand{\Pic}{\mathrm{Pic}}
\newcommand{\Tor}{\mathrm{Tor}}
\newtheorem{lemma}{Lemma}

\title{An explicit quasiplatonic curve with non-abelian moduli field}		
\author{Mois\'es Herrad\'on Cueto\footnote{Supported by the program \textit{Posgrado de Excelencia Internacional en Matem\'aticas 2013-2014} at Universidad Aut\'onoma de Madrid}}		
	\date{}

\begin{document}

\maketitle

\paragraph*{Abstract}
We give an example of a regular dessin d'enfant whose field of moduli is not an abelian extension of the rational numbers, namely it is the field generated by a cubic root of 2. This answers a previous question. We also prove that the underlying curve has non-abelian field of moduli itself, giving an explicit example of a quasiplatonic curve with non-abelian field of moduli. In the last section, we note that two examples in previous literature can be used to find other examples of regular dessins d'enfants with non-abelian field of moduli.
\paragraph*{Keywords}Dessins d'enfants $\cdot$ Field of moduli $\cdot$ Quasiplatonic $\cdot$ Regular maps $\cdot$ Abelian extension
\paragraph*{Mathematics Subject Classification (2000)} 14H57 $\cdot$ 11G32

\section{Introduction}
\label{intro}
Dessins d'enfants were first introduced by Alexander Grothendieck in his \textit{Esquisse d'un Programme} \cite{G} as pairs $(C,\beta)$, consisting of a smooth algebraic curve $C$ defined over the complex numbers, and a map $\beta:C\lra \PP^1(\mathbb C)=\PP^1$ unramified outside of the set $\{0,1,\infty\}$. Such maps are called Belyi maps. Belyi's theorem states that such an algebraic curve $C$ has a Belyi map if and only if it can be defined over $\QQ$. In this case, the Belyi map can also be defined over the algebraic numbers. This gives an action of the group $G_\Q=\Gal(\QQ/\Q)$ on the set of dessins d'enfants via the action of $G_\Q$ on the coefficients of the defining equations of $C$ and $\beta$. Dessins d'enfants are also in bijection with graphs embedded in orientable surfaces by considering $\beta^{-1}([0,1])$, which is a bipartite graph embedded in $C$.

For a Belyi pair $(C,\beta)$, one can consider the subgroup of $G_\Q$ that maps it to an isomorphic pair. The fixed field of this subgroup is the field of moduli of the dessin, an important invariant. In this paper, we find two examples of regular dessins d'enfants with field of moduli $\Q(\sqrt[3]{2})$, thus providing explicit examples of regular dessins d'enfants with non-abelian fields of moduli, which is a question posed in \cite{CJSW}. Although no explicit examples are in the literature, the existence of regular dessins with non-abelian field of moduli follows from a theorem of Jarden \cite{J}, and they are known to exist even if one fixes the type of the dessin \cite{GJ}. We prove that the underlying curve of one of them actually has itself the same field of moduli, so we also provide an example of a quasiplatonic curve with non-abelian field of moduli. We give explicit equations for this example.

For the construction, we look at a known dessin d'enfant on the elliptic curve with equation $Y^2=X(X-1)(X-\sqrt[3]{2})$, which has field of moduli $\Q(\rd)$ and was studied in \cite{W}. Then, we look at its regular cover, which is a dessin of degree $2^6\cdot 3^2=576$ on a curve of genus 145, and we prove it has field of moduli $\Q(\rd)$ as well. We also find a different regular dessin with the same field of moduli, which has this one as a degree two covering, and has smaller genus, namely 61. We give explicit equations for this last dessin.

The result is explained in further detail in the author's master's thesis \cite{H}.
\section{The main example}
\label{sec:1}
We remind the reader that a dessin d'enfant can be seen in many different ways. First of all, starting with a Belyi pair $(C,\beta)$, one can take the preimage of the interval $[0,1]$ to obtain a graph on the surface $C$, which is usually drawn with black vertices in the preimage of $0$ and white vertices in the preimage of $1$. Throughout, we will follow this convention. Since $\beta$ is a ramified cover, dessins are also in correspondence with their monodromy actions, which are homomorphisms from $\pi_1(\PP^1\setminus\{0,1,\infty\})=F_2$ onto a transitive subgroup of a permutation group. Since these homomorphisms factor through a finite quotient, we may think of them as homomorphisms from the profinite completion of $\Fd$ instead. This has the advantage that $G_\Q$ acts by automorphisms on $\Fd$ (see \cite{GJ}). Finally, from a monodromy action one can consider the stabilizer of a point, yielding a bijection between monodromy actions and conjugacy classes of finite index subgroups of $\Fd$. We will freely go from one setting to another. Possible references for details on this correspondence include \cite{Gu} and \cite{GG}.

Let $(C_{\mH_0},\beta_{\mH_0})$ be the Belyi pair consisting of the curve with equation $Y^2=X(X-1)(X-\rd)$ and the Belyi function $\beta_{\mH_0}(X,Y)=1-(X^3-1)^2$. We will call the corresponding dessin d'enfant $\mH_0$, which is drawn in Figure \ref{fig:toro}. The details about this dessin can be found in \cite{W}.

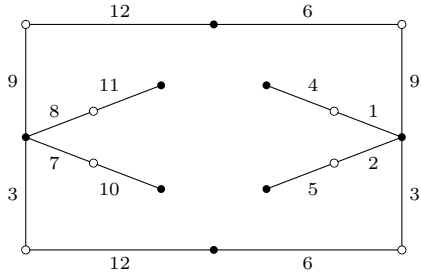
\begin{figure}[h!]
\begin{center}
\begin{tikzpicture}[line cap=round,line join=round,>=triangle 45,x=2.5cm,y=1.5cm]
\clip(-1.7,-1.5) rectangle (1.5,1.5);
\draw (1,1)--(1,-1)--(-1,-1)--(-1,1)--(1,1);
\draw (1,0)--(0.28,0.46);
\draw (1,0)--(0.28,-0.46);
\draw (-1,0)--(-0.28,0.46);
\draw (-1,0)--(-0.28,-0.46);
\begin{scriptsize}
\puntonegro{(1,0)} 
\puntoblanco{(1,1)} 
\puntonegro{(0,1)} 
\puntonegro{(0,-1)}
\puntonegro{(-1,0)} 
\puntoblanco{(-1,1)} 
\puntoblanco{(-1,-1)} 
\puntoblanco{(1,-1)} 
\puntoblanco{(0.64,0.23)}
\puntoblanco{(0.64,-0.23)} 
\puntoblanco{(-0.64,0.23)}
\puntoblanco{(-0.64,-0.23)}
\puntonegro{(0.28,0.46)}
\puntonegro{(0.28,-0.46)} 
\puntonegro{(-0.28,0.46)}
\puntonegro{(-0.28,-0.46)}
\draw (1,0.5) node[anchor=west] {9};
\draw (1,-0.5) node[anchor=west] {3};
\draw (-1,0.5) node[anchor=east] {9};
\draw (-1,-0.5) node[anchor=east] {3};
\draw (0.5,1) node[anchor=south] {$6$};
\draw (-0.5,1) node[anchor=south] {$12$};
\draw (0.5,-1) node[anchor=north] {6};
\draw (-0.5,-1) node[anchor=north] {12};
\draw (0.78,0.11) node[anchor=south west] {1};
\draw (0.78,-0.11) node[anchor=north west] {2};
\draw (-0.78,0.11) node[anchor=south east] {8};
\draw (-0.78,-0.11) node[anchor=north east] {7};
\draw(0.46,0.34) node[anchor=south west] {4};
\draw(0.46,-0.34) node[anchor=north west] {5};
\draw(-0.46,0.34) node[anchor=south east] {11};
\draw(-0.46,-0.34) node[anchor=north east] {10};
\end{scriptsize}
\end{tikzpicture}
\caption{The dessin d'enfant $\mH_0$. Opposite edges are identified to make a genus 1 surface.}\label{fig:toro}
\end{center}
\end{figure}

We take the usual generators of the fundamental group of $\PP^1\setminus \{0,1,\infty\}$, namely $x$, which corresponds to a positively oriented loop going around 0
; $y$, which corresponds to a positively oriented loop going around $1$; and $z=(xy)^{-1}$, which can be seen as going around $\infty$. Thus, the algebraic fundamental group is the free profinite group generated by $x$ and $y$. The correspondence between Belyi pairs and monodromy actions associates to the Belyi pair $(C_{\mH_0},\beta_{\mH_0})$ a transitive action of $\Fd$ on the edges of $\mH_0$. Furthermore, actions correspond to subgroups, by taking the stabilizer of an edge. We will call $H_0<\Fd$ the subgroup corresponding to $\mH_0$. In this case, the loops $x$ and $y$ act as the following permutations on the edges of $\mH_0$, with the numbering of Figure \ref{fig:toro}:
$$
\begin{array}{rrcl}
\rho_0:&x & \longmapsto & (1,2,3,7,8,9)(6,12) \\
&y & \longmapsto & (1,4)(2,5)(7,10)(8,11)(3,6,9,12)
\end{array}
$$
Let $\D_0$ be the regular cover of $\mH_0$, and let $\h_0=\cor_{\Fd} H_0=\ker \rho_0$\footnote{If $H<G$, we define $\cor_GH=\bigcap_{g\in G} gHg^{-1}$.} be the corresponding subgroup. In order to better understand $\D_0$, we will look at a normal subgroup of $\Fd$ that has index 18 and contains $\h_0$. Let $\mF$ be the dessin d'enfant on $\PP^1$ given by the map $\beta_{\mF}:X\longmapsto 1-(X^3-1)^2$ (drawn in Figure \ref{fig:P1}). Since $\beta_{\mH_0}$ factors through $\beta_{\mF}$, $\mF$ is an intermediate dessin of $\mH_0$. $\mF$ has a monodromy action given by $\rho_F$ (also in Figure \ref{fig:P1}). Its regular cover, $\wmF$, is an intermediate dessin of $\D_0$.

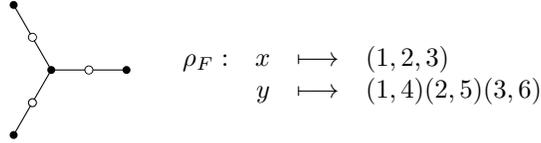
\begin{figure}[h!]
\begin{center}
\begin{tabular}{cc}
\begin{tikzpicture}[line cap=round,line join=round,>=triangle 45,x=0.5cm,y=0.5cm]
\clip(-2,-2) rectangle (2.3,2);
\draw (-1,-1.73)-- (0,0);
\draw (0,0)-- (2,0);
\draw (0,0)-- (-1,1.73);
\begin{scriptsize}
\fill [color=black] (0,0) circle (1.5pt);
\draw [color=black] (1,0) circle (1.5pt);
\fill [color=white] (1,0) circle (1.3pt);
\fill [color=black] (2,0) circle (1.5pt);
\draw [color=black] (-0.5,0.87) circle (1.5pt);
\fill [color=white] (-0.5,0.87) circle (1.3pt);
\fill [color=white] (-0.5,-0.87) circle (1.3pt);
\draw [color=black] (-0.5,-0.87) circle (1.5pt);
\fill [color=black] (-1,1.73) circle (1.5pt);
\fill [color=black] (-1,-1.73) circle (1.5pt);
\end{scriptsize}
\end{tikzpicture}
&
$
\begin{array}{rrcl}
\rho_F:&x & \longmapsto & (1,2,3) \\
&y & \longmapsto & (1,4)(2,5)(3,6) \\
\\
\\
\\
\\
\end{array}
$
\end{tabular}
\vspace{-0.7 cm}
\caption{The dessin $\mF$ and its monodromy homomorphism.}\label{fig:P1}
\end{center}
\end{figure}
\vspace{-0.4 cm}
Let us call $\wF$ the subgroup corresponding to the regular cover of this dessin. The group $\rho_0(\wF)$ is the subgroup of $\rho_0(\Fd)$ preserving the partition $\{ \{1,7\},\{2,8\},$ $\{3,9\},\{4,10\},\{5,11\},\{6,12\}\}$. One checks directly that $\wF$ is the normal subgroup generated by $\{x^3,y^2,[x,x^y]\}$\footnote{From now on, when $a,b$ are elements of a group, we denote $a^b=b^{-1}ab$.}.

The dessin $\mH_0$ has two conjugate dessins, which we will call $\mH_1$ and $\mH_2$, obtained when $\sqrt[3]{2}$ is replaced in the equation by its Galois conjugates. Thus, $\mH_1$ is a dessin on the curve with equation $Y^2=X(X-1)(X-\xi\sqrt[3]{2})$, where $\xi=\frac{-1+i\sqrt{3}}{2}$ is a cubic root of unity, and $\mH_2$ is a dessin on the curve with equation $Y^2=X(X-1)(X-\xi^2\sqrt[3]{2})$. They both have Belyi maps given by the expression $(X,Y)\longmapsto 1-(X^3-1)^2$, which look like Figure \ref{fig:toros}. One can obtain these pictures as degree 2 coverings of $\mF$, using the fact that a ramified covering of the sphere of degree 2 is uniquely determined by its set of ramification values. Note that in all three cases the degree 2 map to $\PP^1$ is given by the quotient by the $180^\circ$ rotation around the center of the rectangle (which is the automorphism with equation $(X,Y)\mapsto (X,-Y)$).
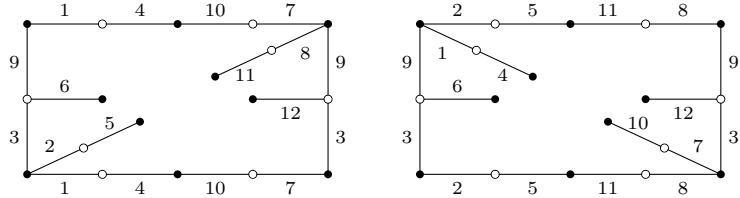
\begin{figure}[h!]
\begin{center}
\begin{tabular}{cc}
\begin{tikzpicture}[line cap=round,line join=round,>=triangle 45,x=1cm,y=1cm]
\clip(-2.4,-1.3) rectangle (2.4,1.3);
\draw (-2,-1)--(-2,1)--(2,1)--(2,-1)--(-2,-1);
\draw (-2,0)--(-1,0);
\draw (2,0) --(1,0);
\draw (-2,-1) --(-0.5,-0.3);
\draw (2,1) --(0.5,0.3);
\begin{scriptsize}
\puntoblanco{(-2,0)}
\puntoblanco{(2,0)}
\puntonegro{(-1,0)} 
\puntonegro{(1,0)} 
\puntonegro{(-2,1)}
\puntonegro{(-2,-1)}
\puntonegro{(2,1)}
\puntonegro{(2,-1)} 
\puntoblanco{(1,1)}
\puntoblanco{(-1,1)} 
\puntoblanco{(1,-1)} 
\puntoblanco{(-1,-1)}
\puntonegro{(0,1)}
\puntonegro{(0,-1)}
\puntoblanco{(1.25,0.65)} 
\puntoblanco{(-1.25,-0.65)}
\puntonegro{(0.5,0.3)} 
\puntonegro{(-0.5,-0.3)}
\draw (-1.5,1) node[anchor=south]{1};
\draw (-0.5,1) node[anchor=south]{4};
\draw (0.5,1) node[anchor=south]{10};
\draw (1.5,1) node[anchor=south]{7};
\draw (-1.5,-1) node[anchor=north]{1};
\draw (-0.5,-1) node[anchor=north]{4};
\draw (0.5,-1) node[anchor=north]{10};
\draw (1.5,-1) node[anchor=north]{7};
\draw (2,0.5) node[anchor=west]{9};
\draw (2,-0.5) node[anchor=west]{3};
\draw (-2,0.5) node[anchor=east]{9};
\draw (-2,-0.5) node[anchor=east]{3};
\draw (-1.7,-0.8) node[anchor=south]{2};
\draw (1.7,0.8) node[anchor=north]{8};
\draw (-0.9,-0.5) node[anchor=south]{5};
\draw (0.9,0.5) node[anchor=north]{11};
\draw (1.5,0) node[anchor=north]{12};
\draw (-1.5,0) node[anchor=south]{6};
\end{scriptsize}
\end{tikzpicture}
&
\begin{tikzpicture}[line cap=round,line join=round,>=triangle 45,x=1cm,y=1cm]
\clip(-2.4,-1.3) rectangle (2.4,1.3);
\draw (-2,-1)--(-2,1)--(2,1)--(2,-1)--(-2,-1);
\draw (-2,0)--(-1,0);
\draw (2,0) --(1,0);
\draw (-2,1) --(-0.5,0.3);
\draw (2,-1) --(0.5,-0.3);
\begin{scriptsize}
\puntoblanco{(-2,0)}
\puntoblanco{(2,0)}
\puntonegro{(-1,0)} 
\puntonegro{(1,0)} 
\puntonegro{(-2,1)}
\puntonegro{(-2,-1)}
\puntonegro{(2,1)}
\puntonegro{(2,-1)} 
\puntoblanco{(1,1)}
\puntoblanco{(-1,1)} 
\puntoblanco{(1,-1)} 
\puntoblanco{(-1,-1)}
\puntonegro{(0,1)}
\puntonegro{(0,-1)}
\puntoblanco{(1.25,-0.65)} 
\puntoblanco{(-1.25,0.65)}
\puntonegro{(0.5,-0.3)} 
\puntonegro{(-0.5,0.3)}
\draw (-1.5,1) node[anchor=south]{2};
\draw (-0.5,1) node[anchor=south]{5};
\draw (0.5,1) node[anchor=south]{11};
\draw (1.5,1) node[anchor=south]{8};
\draw (-1.5,-1) node[anchor=north]{2};
\draw (-0.5,-1) node[anchor=north]{5};
\draw (0.5,-1) node[anchor=north]{11};
\draw (1.5,-1) node[anchor=north]{8};
\draw (2,0.5) node[anchor=west]{9};
\draw (2,-0.5) node[anchor=west]{3};
\draw (-2,-0.5) node[anchor=east]{3};
\draw (-2,0.5) node[anchor=east]{9};
\draw (-1.7,0.8) node[anchor=north]{1};
\draw (1.7,-0.8) node[anchor=south]{7};
\draw (-0.9,0.5) node[anchor=north]{4};
\draw (0.9,-0.5) node[anchor=south]{10};
\draw (1.5,0) node[anchor=north]{12};
\draw (-1.5,0) node[anchor=south]{6};
\end{scriptsize}
\end{tikzpicture}
\end{tabular}
\caption{To the left, the dessin d'enfant $\mH_1$, and to the right, the dessin $\mH_2$.}\label{fig:toros}
\end{center}
\end{figure}
These dessins d'enfants have monodromy actions corresponding to the following homomorphisms from $\Fd$ to $S_{12}$:
$$
\begin{array}{rrcl}
\rho_1:&x & \longmapsto & (1,2,3,7,8,9)(4,10) \\
&y & \longmapsto & (1,4)(2,5)(7,10)(8,11)(3,6,9,12)\\
\rho_2:&x & \longmapsto & (1,2,3,7,8,9)(5,11) \\
&y & \longmapsto & (1,4)(2,5)(7,10)(8,11)(3,6,9,12)
\end{array}
$$
Let $H_1$ and $H_2$ be the subgroups of $\Fd$ corresponding to $\mH_1$ and $\mH_2$, and let $\h_1$ and $\h_2$ be the subgroups corresponding to their regular covers $\wt{\mH}_1$ and $\wt{\mH}_2$. Note that $\wF$ contains $\h_1$ and $\h_2$, since $\mF$ is an intermediate dessin of both $\mH_1$ and $\mH_2$.

The Galois action preserves regular covers, since $G_{\Q}$ acts as automorphisms of $\Fd$ (see \cite{GJ}). Therefore, if for $\sigma\in G_\Q$, $\sigma(\rd)=\xi\rd$, then $\h_0^\sigma=\h_1$, where the superscript denotes the Galois action. Let us prove that $\h_0\neq \h_1$. One just needs to check that $g=x^3y^2(x^3y^2)^x(x^3y^2)^{x^2}$ belongs to $\h_0$ but not to $\h_1$. Indeed, $\rho_0(g)=1$, but $\rho_1(g)=(4,10)(6,12)$. Thus, the field of moduli of $\D_0$ is $\Q(\rd)$.

\subsection{A different, smaller dessin}
We have two goals in this section. The first one will be to prove that the center of $\Fd/\h_0$ has order 2, which lifts to a subgroup $E_0\supset \h_0$. Via $\rho_0$, the generator of the center is mapped to the permutation $(1,7)(2,8)(3,9)(4,10)(5,11)(6,12)$. Since the center is a characteristic subgroup, the Galois group will map $E_0$ to subgroups $E_1\supset \h_1$ and $E_2\supset \h_2$, such that $E_i/\h_i$ are also the centers of $\Fd/\h_i$. We will have to prove that the $E_i$'s are also distinct, that is, to rule out the possibility that $E_i=\h_0\h_1\h_2$. This implies that the $E_i$'s correspond to a new Galois orbit of regular dessins $\mE_i$, which also have field of moduli $\Q(\sqrt[3]{2})$. The second goal is to learn more about $\Fd/E_0$, which will enable us to give equations for $\mE_0$ later. One can alternatively use a computer algebra system to find the center of $\Fd/\h_0$, which is also its unique normal subgroup of order 2, and to prove that the $E_i$'s are indeed distinct.

Let $K$ be the quotient $\wF/\h_0$. Using that $\wF$ is the normal subgroup generated by $\{x^3,y^2,[x,x^y]\}$, one checks that $\rho_0$ induces an isomorphism between $\wF/\h_0=K$ and the subgroup of permutations which are even and fix the partition $\{\{i,i+6\}:i=1,\ldots ,6\}$. Therefore, $K \cong (\zn{2})^5$. We will use the notation $a_i=(i,i+6)$ and use additive notation for this group. $K$ has a natural (left) $\Fd/\wF$-module structure, given by conjugation in $\Fd$ (on the left). As an $\Fd/\wF$-module, $K$ decomposes into irreducible $\Fd/\wF$-modules as follows:
$$
K=K_0\oplus K_1=\langle a_1+a_2+a_3+a_4+a_5+a_6\rangle \oplus \langle a_1+a_2,a_2+a_3,a_4+a_5,a_5+a_6\rangle
$$
The module $K_0$ has dimension $1$ as an $\F_2$-vector space, and it has trivial $\Fd/\wF$ action. Let us look at the action on $K_1$. We can define an embedding $\psi$ of $\Fd/\wF$ into $\mathrm{GL}_2(\F_4)$: If $\xi\in \F_4$ is a generator of $\F_4$ over $\F_2$ (so it is a root of $X^2+X+1$), it is given by
\begin{equation}\label{isom}
\psi:x  \mapsto
\left(\begin{matrix} \xi & 0\\ 0 & 1\end{matrix}\right)
;
 y  \mapsto   \left(\begin{matrix}0 & 1\\ 1  & 0\end{matrix}\right) 
\end{equation}
Via this embedding of $\Fd/\wF$ into $\mathrm{GL}_2(\F_4)$, $K_1$ can be seen as the $\mathrm{GL}_2(\F_4)$ module $\F_4^2$, after the following homomorphism $\psi'$, which is defined on the basis elements in the following way:
\begin{equation}\label{isom2}
\begin{array}{rrclcrcl}
\psi': &a_1+a_2 & \mapsto & (1,0)&;   & a_4+a_5 & \mapsto & (0,1)\\
&a_2+a_3 & \mapsto & (\xi,0) &;& a_5+a_6 & \mapsto & (0,\xi)
\end{array}
\end{equation}
Thus $K\cong \F_2\oplus \F_4^2$, as an $\Fd/\wF$-module. From now on, whenever we write $\F_2$, we will refer to it as a trivial $\Fd/\wF$-module, and when we write $\F_4^2$, we will mean this vector space with the $\Fd/\wF$-module structure just described. If we look at the conjugate dessins, there is a natural isomorphism from $\wF/\h_1$ and $\wF/\h_2$ to $K$, given by using the homomorphisms $\rho_1$ and $\rho_2$ to $S_{12}$, and then mapping the transpositions as we did for $\h_0$. Via these isomorphisms, $\wF/\h_1$ and $\wF/\h_0$ have the same $\Fd/\wF$-module structure as $\h_0$.

Let us now consider the group $\h_0\cap \h_1\cap\h_2$. The quotient $\Fd/(\h_0\cap \h_1\cap\h_2)$ embeds into $\Fd/\h_0\times \Fd/\h_1\times \Fd/\h_2$, and this embedding is an $\Fd/\wF$-module homomorphism, which we will call $\Phi$. Thus, the latter module is isomorphic to $(\F_2\times \F_4^2)^3$, and $\Fd/(\h_0\cap \h_1\cap\h_2)$ is isomorphic to its image in this module. $\wF$ is generated by $\{x^3,y^2,[x,x^y]\}$ and their conjugates. Therefore, the module we are looking at is generated as a module by $\{x^3,y^2,[x,x^y]\}$. The corresponding images of these elements are as follows:
$$
\begin{array}{c|c|c|c}
\alpha & x^3 & y^2 & \left[x,x^y\right]\\
\hline
\rho_0(\alpha) & a_1+a_2+a_3+a_6 & a_3+a_6 & a_2+a_3+a_5+a_6\\
\rho_1(\alpha)& a_1+a_2+a_3+a_4 & a_3+a_6 & a_1+a_3+a_4+a_6\\
\rho_2(\alpha) & a_1+a_2+a_3+a_5 & a_3+a_6 & a_1+a_2+a_4+a_5\\
\psi'(\rho_0(\alpha)) & (1,(0,1)) & (1,(1,1)) & (0,(\xi,\xi))\\
\psi'(\rho_1(\alpha)) & (1,(0,\xi)) & (1,(1,1)) & (0,(\xi^2,\xi^2))\\
\psi'(\rho_2(\alpha)) & (1,(0,\xi^2)) & (1,(1,1)) &  (0,(1,1))
\end{array}
$$
We claim that the image of $\Fd/(\h_0\cap \h_1\cap\h_2)$ is isomorphic to $\F_2\times \F_4^2\times \F_4^2$, via the following homomorphism:
$$
\begin{array}{rcl}
\Psi:\F_2\times \F_4^2 \times \F_4^2 & \longrightarrow & \F_2^3 \times (\F_4^2)^3\\
(a,(b_0,c_0),(b_1,c_1)) & \longmapsto & (a,a,a,(b_0,c_0),(b_1,c_1),\xi (b_0,c_0)+\xi^2(b_1,c_1))
\end{array}
$$
The image of $\Psi$ is indeed contained in the image of $\Phi$, as one can check on the generators:
$$
\begin{array}{rcl}
\Psi(1,(0,0),(0,0)) & = & b=(1+xx^y+(xx^y)^2)\Phi(x^3) \\
\Psi(0,(1,0),(0,0)) & = & c=(1+x^y+(x^y)^2)(\Phi(x^3)+x^2\Phi(y^2)+\Phi([x,x^y]) \\
\Psi(0,(0,0),(1,0)) & = & (1+x^y+(x^y)^2)((1+y)c+b+\Phi(y^2)) \\
\end{array}
$$
We leave checking that $\Psi$ does induce an isomorphism to the reader. By $\Psi^{-1}$, $\Phi(\h_0/(\h_0\cap  \h_1\cap\h_2))$ is mapped into the submodule of $\F_2\times \F_4^2\times \F_4^2$ such that $a=b_0=c_0=0$, and similarly $\Phi(\h_1/(\h_0\cap  \h_1\cap\h_2))$ is mapped into the submodule such that $a=b_1=c_1=0$.

Thus we have the following diagram of submodules of $\wF/(\h_0\cap \h_1 \cap \h_2)\cong \F_2\times (\F_4^2)^2$, which corresponds to the diagram of normal subgroups on the right:
\begin{center}\hspace{0.5 cm}
\begindc{\commdiag}[3]
\obj(0,30)[b]{$\F_2\times \F_4^2\times \F_4^2$}
\obj(0,15)[b']{$0\times \F_4^2\times \F_4^2$}
\obj(20,7)[e0]{$\F_2\times \F_4^2\times 0$}
\obj(-20,7)[e1]{$\F_2\times 0\times \F_4^2$}
\obj(20,-7)[d0]{$0\times \F_4^2\times 0$}
\obj(-20,-7)[d1]{$0\times 0\times \F_4^2$}
\obj(0,-15)[t]{$\F_2\times 0\times 0$}
\obj(0,-30)[a]{$0$}
\mor{b'}{b}{}[\atleft,\solidline]
\mor{e0}{b}{}[\atleft,\solidline]
\mor{d0}{b'}{}[\atleft,\solidline]
\mor{e1}{b}{}[\atleft,\solidline]
\mor{d1}{b'}{}[\atleft,\solidline]
\mor{d0}{e0}{}[\atleft,\solidline]
\mor{d1}{e1}{}[\atleft,\solidline]
\mor{t}{e0}{}[\atleft,\solidline]
\mor{t}{e1}{}[\atright,\solidline]
\mor{a}{d0}{}[\atleft,\solidline]
\mor{a}{d1}{}[\atright,\solidline]
\mor{a}{t}{}[\atright,\solidline]
\obj(60,30)[b00]{$F$}
\obj(60,15)[b'00]{$\h_0\h_1$}
\obj(40,7)[e000]{$E_0$}
\obj(80,7)[e100]{$E_1$}
\obj(40,-7)[d000]{$\h_0$}
\obj(80,-7)[d100]{$\h_1$}
\obj(60,-15)[t00]{$E_0\cap E_1$}
\obj(60,-30)[a00]{$\h_0\cap \h_1$}
\mor{b'00}{b00}{$2$}[\atleft,\solidline]
\mor{e000}{b00}{$2^4$}[\atleft,\solidline]
\mor{d000}{b'00}{$2^4$}[\atleft,\solidline]
\mor{e100}{b00}{$2^4$}[\atleft,\solidline]
\mor{d100}{b'00}{$2^4$}[\atright,\solidline]
\mor{d000}{e000}{$2$}[\atleft,\solidline]
\mor{d100}{e100}{$2$}[\atright,\solidline]
\mor{t00}{e000}{$2^4$}[\atleft,\solidline]
\mor{t00}{e100}{$2^4$}[\atright,\solidline]
\mor{a00}{d000}{$2^4$}[\atleft,\solidline]
\mor{a00}{d100}{$2^4$}[\atright,\solidline]
\mor{a00}{t00}{$2$}[\atright,\solidline]
\enddc
\end{center}

Corresponding to the submodules $\F_2\times \F_4^2\times 0$ and $\F_2\times 0\times \F_4^2$ we have two normal subgroups $E_0$ and $E_1$ of $\Fd$. The subgroup $\h_2$ corresponds to the module
$$
\{
(a,(b_0,c_0),(b_1,c_1)):a=0,\xi(b_0,c_0)+\xi^2(b_1,c_1)=0
\}
$$
The same as with $\h_0$ and $\h_1$, this module is contained as a codimension 1 submodule in
$$
\{
(a,(b_0,c_0),(b_1,c_1)):\xi(b_0,c_0)+\xi^2(b_1,c_1)=0
\}
$$
This module corresponds to a normal subgroup of $\Fd$, which we will call $E_2$. One can now check that $E_i/\h_i=Z(\Fd/\h_i)$, from which follows that the $E_i$'s are permuted by the Galois action, as we claimed before. Thus, the regular dessin d'enfant $\mE_0$ corresponding to $E_0$ has the same field of moduli as $\h_0$, which is $\Q(\rd)$.

The degree of the dessin is thus $[\Fd:E_0]=[\Fd:\wF][\wF:E_0]=18\cdot 2^4=288$. We can compute the genus of $E_0$ by applying the Riemann-Hurwitz formula: In $\wF/E_0$, which is isomorphic to $\frac{\F_2 \times \F_4^2 \times \F_4^2}{\F_2 \times 0 \times \F_4^2}\cong \F_4^2$, the image of $x^3$ is $(0,1)$ and the image of $y^2$ is $(1,1)$, so $x$ has order 6 and $y$ has order 4. Finally, $(xy)^6$ acts as the permutation $(1,7)(2,8)(3,9)(4,10)(5,11)(6,12)$ on all three dessins $\mH_0$, $\mH_1$ and $\mH_2$, so its image in $\F_2 \times \F_2 \times \F_2 \times \F_4^2\times\F_4^2\times\F_4^2$ equals $(1,1,1,(0,0),(0,0),(0,0))$, and its image in $\wF/E_0$ is 0. Therefore, the order of $xy$ in $\Fd/E_0$ is 6. Thus, the Euler characteristic of the dessin is $[\Fd:E_0](\frac{1}{|x|}+\frac{1}{|y|}+\frac{1}{|xy|}-1)=-120$, and its genus is therefore 61.
\subsection{Explicit equations}
Now we are going to compute equations for $\mE_0$. As an intermediate step, we obtain equations for $\wmF$, the regular dessin corresponding to $\wF$.
\begin{lemma}
The dessin d'enfant $\wmF$ arises from the Belyi pair $(C_{\wmF},\beta_{\wmF})$, where $C_{\wmF}$ is the elliptic curve with equation $X^3+Y^3=2$, and $\beta_{\wmF}$ is given in affine coordinates by $\beta_{\wmF}:(X,Y)\mapsto 1-(1-X^3)^2$. 
\end{lemma}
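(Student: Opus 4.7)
The plan is to show that $\beta_{\wmF}$ factors as $\beta_\mF\circ\pi$, where $\pi\colon(X,Y)\mapsto X$ is the projection, and then to verify three things from this: that $\beta_{\wmF}$ is a Belyi map of degree $18$, that it is Galois, and that it factors through $\mF$. Together these will force the normal subgroup of $\Fd$ attached to $\beta_{\wmF}$ to equal $\wF$, since both have index $[\Fd:\wF]=|\rho_F(\Fd)|=18$.

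The first step is a standard ramification count. The curve $X^3+Y^3=2$ is a smooth plane cubic (hence genus $1$), and the projection $\pi$ has degree $3$, ramified precisely over the three cube roots of $2$; meanwhile $\beta_\mF\colon X\mapsto 1-(X^3-1)^2$ is Belyi of degree $6$ and its fibre over $0$ already contains those cube roots of $2$. Composing, $\beta_{\wmF}=\beta_\mF\circ\pi$ has degree $18$ and is ramified only over $\{0,1,\infty\}$.

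The second, more delicate step is to exhibit $18$ deck transformations. Nine of them are the obvious scalings $(X,Y)\mapsto(\zeta^iX,\zeta^jY)$ with $\zeta$ a primitive cube root of unity, all of which fix $X^3$ and hence $\beta_{\wmF}$. The crucial additional symmetry is the swap $(X,Y)\mapsto(Y,X)$: at first glance it does not appear to preserve $\beta_{\wmF}$, but the equation of the curve gives $1-Y^3=X^3-1$, so $(1-Y^3)^2=(1-X^3)^2$. This single observation is the real content of the lemma, the one place where the defining equation of $C_{\wmF}$ is used in an essential way; once it is in hand, combining the swap with the nine scalings produces $18$ distinct deck transformations, which matches $\deg\beta_{\wmF}$ and forces the cover to be Galois. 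The corresponding normal subgroup of $\Fd$ is then contained in the subgroup $F$ attached to $\mF$, and the index comparison above identifies it with $\cor_{\Fd}F=\wF$, completing the proof.
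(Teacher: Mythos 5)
Your proposal is correct and follows essentially the same route as the paper: factor $\beta_{\wmF}=\beta_{\mF}\circ\pi$ through the degree-$3$ projection, verify the Belyi and regularity conditions, and identify the result with the unique regular dessin of degree $3$ over $\mF$ (equivalently, with $\cor_{\Fd}F=\wF$ by index comparison). The paper simply generates your order-$18$ deck group by $f_x:(X,Y)\mapsto(\xi X,Y)$ and the swap $f_y:(X,Y)\mapsto(Y,X)$ and leaves the verification that the swap preserves $\beta_{\wmF}$ (your observation $1-Y^3=X^3-1$ on the curve) to the reader.
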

\begin{proof}
We use the fact that $\wmF$ is the regular cover of $\mF$, and therefore it is the unique regular dessin that has degree 3 over $\mF$. The pair $(C_{\wmF},\beta_{\wmF})$ factors through $\beta_{\mF}$, by the degree 3 map $\pi:(X,Y)\mapsto X$, so that $\beta_{\wmF}=\beta_{\mF}\circ \pi$. It only remains to check that the pair is a regular Belyi pair: indeed, it is the quotient by the group of automorphisms generated by $f_x:(X,Y)\mapsto (\xi X,Y)$ (recall that $\xi=\frac{-1+\sqrt{-3}}{2}$), and $f_y:(X,Y)\mapsto (Y,X)$. One can check that its ramification values are $\{0,1,\infty\}$.

\end{proof}

Using the monodromy, we can draw this dessin, as seen in Figure \ref{Fermat}. Via the isomorphism $\Psi$ in equation (\ref{isom}), $x$ and $y$ are identified with elements in $\mathrm{GL}_2(\F_4)$, and their action on edges corresponds to the left action of $\Fd/\wF$ on itself. Opposite edges with the same matrix label are identified, so the glued polygon becomes homeomorphic to a torus.

\begin{figure}[h!]
\centering
\begin{tikzpicture}[line cap=round,line join=round,>=triangle 45,x=0.7cm,y=0.7cm]
\clip(-10,-8) rectangle (8,8);
\begin{scriptsize}
\draw (0,0)-- (4,0)-- (6,3.46)-- (4,6.93)-- (0,6.93)-- (-2,3.46)-- (0,0);
\draw (4,0)-- (6,-3.46);
\draw (6,-3.46)-- (4,-6.93);
\draw (4,-6.93)-- (0,-6.93);
\draw (0,-6.93)-- (-2,-3.46);
\draw (-2,-3.46)-- (0,0);
\draw (0,0)-- (-2,-3.46);
\draw (-2,-3.46)-- (-6,-3.46);
\draw (-6,-3.46)-- (-8,0);
\draw (-8,0)-- (-6,3.46);
\draw (-6,3.46)-- (-2,3.46);
\draw (-2,3.46)-- (0,0);
\draw (2,3.46)-- ++(-1.0pt,-1.0pt) -- ++(2.0pt,2.0pt) ++(-2.0pt,0) -- ++(2.0pt,-2.0pt) ++ (-1.0pt,-0.4pt) -- ++(0,2.8pt) ++(-1.4pt,-1.4pt)-- ++(2.8pt,0);
\draw (2,-3.46)-- ++(-1.0pt,-1.0pt) -- ++(2.0pt,2.0pt) ++(-2.0pt,0) -- ++(2.0pt,-2.0pt) ++ (-1.0pt,-0.4pt) -- ++(0,2.8pt) ++(-1.4pt,-1.4pt)-- ++(2.8pt,0);
\draw (-4,0)-- ++(-1.0pt,-1.0pt) -- ++(2.0pt,2.0pt) ++(-2.0pt,0) -- ++(2.0pt,-2.0pt) ++ (-1.0pt,-0.4pt) -- ++(0,2.8pt) ++(-1.4pt,-1.4pt)-- ++(2.8pt,0);
\puntoblanco{(5,5.195)}
\puntoblanco{(5,1.73)}
\puntoblanco{(5,-1.73)}
\puntoblanco{(5,-5.195)}
\puntoblanco{(2,6.93)}
\puntoblanco{(2,-6.93)}
\puntoblanco{(2,0)}
\puntoblanco{(-1,5.195)}
\puntoblanco{(-1,-5.195)}
\puntoblanco{(-1,1.73)}
\puntoblanco{(-1,-1.73)}
\puntoblanco{(-4,-3.46)}
\puntoblanco{(-7,1.73)}
\puntoblanco{(-7,-1.73)}
\puntoblanco{(-4,3.46)}
\puntonegro{(0,0)}
\puntonegro{(4,0)}
\puntonegro{(6,3.46)}
\puntonegro{(4,6.93)}
\puntonegro{(0,6.93)}
\puntonegro{(-2,3.46)}
\puntonegro{(6,-3.46)}
\puntonegro{(4,-6.93)}
\puntonegro{(0,-6.93)}
\puntonegro{(-2,-3.46)}
\puntonegro{(-6,-3.46)}
\puntonegro{(-6,3.46)}
\puntonegro{(-8,0)}
\draw (1,0) node[anchor=south] {\scalebox{0.8}{$\mat{1}{0}{0}{1}$}};
\draw (3,0) node[anchor=south] {\scalebox{0.8}{$\mat{0}{1}{1}{0}$}};
\draw (4.5,-0.87) node[anchor=west] {\scalebox{0.8}{$\mat{0}{\xi}{1}{0}$}};
\draw (5.5,-2.6) node[anchor=west] {\scalebox{0.8}{$\mat{1}{0}{0}{\xi}$}};
\draw (5.5,4.33) node[anchor=west] {\scalebox{0.8}{$\mat{\xi^2}{0}{0}{\xi^2}$}};
\draw (4.5,0.87) node[anchor=west] {\scalebox{0.8}{$\mat{0}{\xi^2}{1}{0}$}};
\draw (5.5,2.6) node[anchor=west] {\scalebox{0.8}{$\mat{1}{0}{0}{\xi^2}$}};
\draw (5.5,-4.33) node[anchor=west] {\scalebox{0.8}{$\mat{\xi}{0}{0}{\xi}$}};
\draw (4.5,6.06) node[anchor=west] {\scalebox{0.8}{$\mat{0}{\xi^2}{\xi^2}{0}$}};
\draw (4.5,-6.06) node[anchor=west] {\scalebox{0.8}{$\mat{0}{\xi}{\xi}{0}$}};
\draw (3,6.93) node[anchor=north] {\scalebox{0.8}{$\mat{0}{\xi}{\xi^2}{0}$}};
\draw (1,6.93) node[anchor=south] {\scalebox{0.8}{$\mat{\xi^2}{0}{0}{\xi}$}};
\draw (3,-6.93) node[anchor=south] {\scalebox{0.8}{$\mat{0}{\xi^2}{\xi}{0}$}};
\draw (1,-6.93) node[anchor=north] {\scalebox{0.8}{$\mat{\xi}{0}{0}{\xi^2}$}};
\draw (-0.5,6.06) node[anchor=west] {\scalebox{0.8}{$\mat{\xi}{0}{0}{\xi}$}};
\draw (-0.5,0.87) node[anchor=east] {\scalebox{0.8}{$\mat{\xi}{0}{0}{1}$}};
\draw (-0.5,-6.06) node[anchor=west] {\scalebox{0.8}{$\mat{\xi^2}{0}{0}{\xi^2}$}};
\draw (-1.5,4.33) node[anchor=west] {\scalebox{0.8}{$\mat{0}{\xi}{\xi}{0}$}};
\draw (-1.5,-4.33) node[anchor=west] {\scalebox{0.8}{$\mat{0}{\xi^2}{\xi^2}{0}$}};
\draw (-0.5,-0.87) node[anchor=west] {\scalebox{0.8}{$\mat{\xi^2}{0}{0}{1}$}};
\draw (-1.5,2.6) node[anchor=west] {\scalebox{0.8}{$\mat{0}{1}{\xi}{0}$}};
\draw (-1.5,-2.6) node[anchor=west] {\scalebox{0.8}{$\mat{0}{1}{\xi^2}{0}$}};
\draw (-3,3.46) node[anchor=north] {\scalebox{0.8}{$\mat{0}{\xi^2}{\xi}{0}$}};
\draw (-3,-3.46) node[anchor=south] {\scalebox{0.8}{$\mat{0}{\xi}{\xi^2}{0}$}};
\draw (-5,3.46) node[anchor=south] {\scalebox{0.8}{$\mat{\xi}{0}{0}{\xi^2}$}};
\draw (-5,-3.46) node[anchor=north] {\scalebox{0.8}{$\mat{\xi^2}{0}{0}{\xi}$}};
\draw (-6.5,2.6) node[anchor=east] {\scalebox{0.8}{$\mat{1}{0}{0}{\xi^2}$}};
\draw (-6.5,-2.6) node[anchor=east] {\scalebox{0.8}{$\mat{1}{0}{0}{\xi}$}};
\draw (-7.5,0.87) node[anchor=east] {\scalebox{0.8}{$\mat{0}{\xi^2}{1}{0}$}};
\draw (-7.5,-0.87) node[anchor=east] {\scalebox{0.8}{$\mat{0}{\xi}{1}{0}$}};
\end{scriptsize}
\end{tikzpicture}
\caption{The dessin $\wmF$, where the elements of $\Fd/\wF$ are mapped to $\mathrm{GL}_2(\F_4)$ by the isomorphism (\ref{isom}).}\label{Fermat}
\end{figure}

The next step is to understand the actual coordinates of the vertices of $\wmF$. For this, we will use the automorphism group of $\wmF$, which has two different presentations: one the one hand, it is the full automorphism group of the graph in Figure \ref{Fermat}, and on the other hand, it is the automorphism subgroup of $C_{\wmF}$ generated by $f_x$ and $f_y$ as defined before. We will show that these actions match up as expected.
\begin{lemma}
By the natural homomorphism $\Fd/\wF\to \mathrm{Aut}(C_{\wmF})$, $x$ is mapped to $f_x$ and $y$ is mapped to $f_y$.
\end{lemma}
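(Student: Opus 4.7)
The plan is to verify each assignment locally at a well-chosen fixed point. Because $\wmF$ is a connected regular cover of degree $18$ and $\langle f_x, f_y\rangle \subseteq \mathrm{Aut}(C_{\wmF})$ already realizes the full deck group of $\beta_{\wmF}$ (as established in the previous lemma), the natural homomorphism $\Fd/\wF \to \mathrm{Aut}(C_{\wmF})$ is an isomorphism onto $\langle f_x, f_y \rangle$. Moreover, any deck transformation is determined by its action on a single local parameter, so for each of $x$ and $y$ it suffices to exhibit a preimage of the corresponding branch value that is fixed by the proposed deck transformation and check that its local action agrees with the local monodromy.

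For $x$ I would take the preimage $P=(0,\rd) \in C_{\wmF}$ of $0 \in \PP^1$. This point is fixed by $f_x$, and because $\pi$ is unramified at $P$ the function $X$ is a local parameter on $C_{\wmF}$ there. A short expansion gives $\beta_{\wmF} = 2X^3 + O(X^6)$ near $P$, so the lift of a small positively oriented loop around $0$ multiplies the local coordinate of a nearby point by $\xi$; the deck transformation $f_x$ does exactly the same, by its very definition. Hence the monodromy of $x$ and $f_x$ induce the same germ at $P$, and therefore coincide globally.

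For $y$ I would take the preimage $Q=(1,1) \in C_{\wmF}$ of $1 \in \PP^1$, which is fixed by $f_y$. Using $s=X-1$ as a local parameter and expanding $\beta_{\wmF} - 1 = -(1-X^3)^2 = -9s^2 + O(s^3)$, a positively oriented loop around $1$ lifts to a path that multiplies $s$ by $-1$. On the other hand, $f_y$ interchanges $X$ and $Y$, and the expansion of $Y = (2-X^3)^{1/3}$ at $Q$ gives $Y - 1 = -s + O(s^2)$, so $f_y$ also sends a point with $s=s_0$ to one with $s = -s_0 + O(s_0^2)$. The two germs agree at $Q$, so $y \mapsto f_y$.

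The main obstacle is not mathematical but bookkeeping: one must fix, consistently, orientation conventions for the generating loops $x$ and $y$ (as in Section~\ref{sec:1}) and the convention defining the natural map $\Fd/\wF \to \mathrm{Aut}(C_{\wmF})$ via path lifting, in such a way that the local monodromy of $x$ matches $f_x$ rather than $f_x^{2}$ (and not the other choice). Once those conventions are set, both identifications follow from the Taylor expansions above, and the global conclusions require no further work because $C_{\wmF}$ is connected.
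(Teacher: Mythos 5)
Your proof is correct and follows essentially the same strategy as the paper: identify the candidate automorphism by the ramification point it must fix, then resolve the $f_x$ versus $f_x^2$ ambiguity by computing the local monodromy of the positive loop around $0$. The only difference is cosmetic --- the paper descends to $(\PP^1,\beta_{\mF})$ via $\pi$ and tracks the base point $1/2\mapsto \xi/2$, while you perform the equivalent computation upstairs by expanding $\beta_{\wmF}$ at $(0,\rd)$ and $(1,1)$.
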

\begin{proof}

Recall that the edges on a dessin make up the preimage of the segment $[0,1]$. The real segment parametrized by $(t,\sqrt[3]{2-t^3})$, for $t\in [0,1]$ is contained in the preimage of $[0,1]$, so its endpoints, $(0,\sqrt[3]{2})$ and $(1,1)$, are the endpoints of a segment in the dessin. Let us pick the base point on the segment that has first coordinate $1/2$. Its image by $\pi$ is $1/2\in \PP^1$. By the symmetry of the dessin, this edge $e_0$ can be placed anywhere in the picture, so we may choose the one labelled by the identity matrix in Figure \ref{Fermat}. Its endpoints are $(0,\sqrt[3]{2})$ and $(1,1)$.

Recall the definition of the action of $\Fd/\wF$: once a base point is chosen on some edge $e_0$, $x$ acts as the unique automorphism that maps $e_0$ to the edge which results from rotating $e_0$ counterclockwise around its black endpoint. $y$ acts in a similar way, rotating around the white vertex instead of the black one. In $\wmF$, as drawn in Figure \ref{fermatpuntos}, the action of $x$ corresponds to $120^\circ$ rotation around the center black vertex, and $y$ corresponds to $180^\circ$ rotation around the white vertex to the right of this black vertex (the base edge we are choosing is the one connecting both vertices).

The automorphism of $C_{\wmF}$ corresponding to $x$ must fix the central vertex, $(0,\sqrt[3]{2})$, which, after checking through all 18 automorphisms generated by $\{f_x,f_y\}$, means it must be one of $f_x$ or $f_x^2$. If we look at $f_x$, we have that it descends to an automorphism of the Belyi pair $(\PP^1,\beta_{\mF})$, that is, there is an isomorphism $f_x'$ of $\PP^1$ such that $\beta_{\mF}\circ f_x'=\beta_{\mF}$, and also $\pi \circ f_x=f_x' \circ \pi$. Such an automorphism must be unique, and it is in fact $f_x':X\mapsto \xi X$. Similarly, $f_x^2$ descends to $X\mapsto \xi^2 X$. It is easy to check that the monodromy action of $x$ (the loop going around 0) in the pair $(\PP^1,\beta_{\mF})$ maps the base point we've chosen, $1/2$, to $\xi/2$. Therefore, since the automorphism corresponding to $x$ must send the base point $1/2$ to $\xi/2$, this automorphism must be $f_x'$ and not $(f_x')^2$. Since the automorphism of $\wmF$ corresponding to $x$ must descend to the one corresponding to $x$ in the other dessin, we conclude that it is $f_x$ that corresponds to $x$, and not $f_x^2$. Similarly, the automorphism corresponding to rotating counterclockwise around the white vertex marked $(1,1)$ in Figure \ref{fermatpuntos} must fix $(1,1)$. The only element that fixes $(1,1)$ is $f_y$, so $y$ must correspond to $f_y$.
\end{proof}

\begin{figure}[h!]
\centering
\includegraphics[scale=1.3]{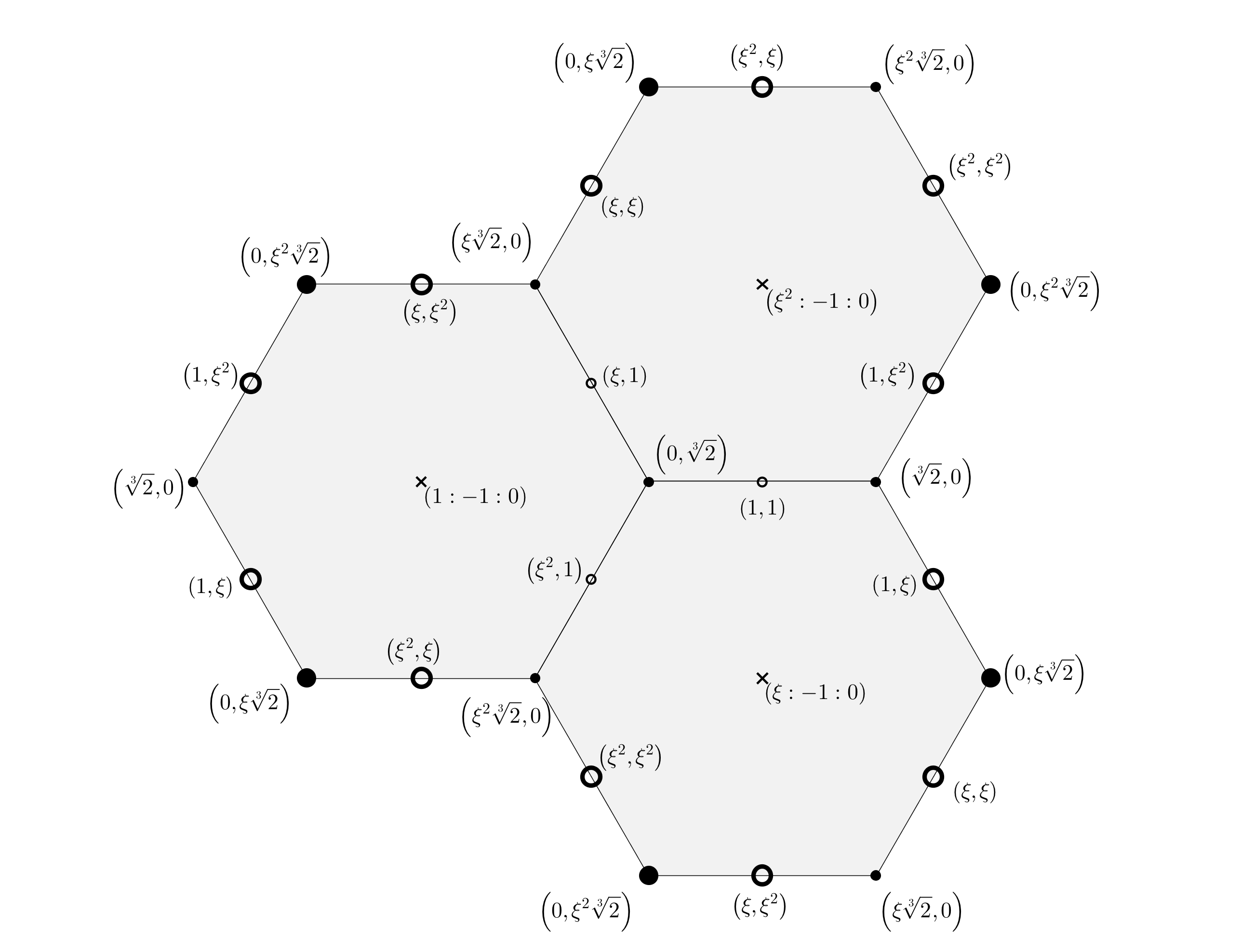}
\caption{The points on the dessin $\wmF$. Bigger vertices are the ones over which $B$ ramifies.}\label{fermatpuntos}
\end{figure}

\begin{lemma}
Up to an automorphism, the coordinates of the vertices of $\wmF$ are as shown in Figure \ref{fermatpuntos}.
\end{lemma}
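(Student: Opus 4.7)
The plan is to leverage the simple transitivity of the deck group on the edges of the regular dessin $\wmF$. By the previous lemma, the identification $\Fd/\wF \cong \mathrm{Aut}(C_{\wmF})$ sends $x \mapsto f_x$ and $y \mapsto f_y$, so every edge of $\wmF$ is of the form $g \cdot e_0$ for a unique $g \in \mathrm{Aut}(C_{\wmF})$, where $e_0$ denotes the base edge labelled by the identity matrix in Figure \ref{Fermat}. Its two endpoints are pinned down in the proof of the previous lemma to be the black vertex $(0, \sqrt[3]{2})$ and the white vertex $(1,1)$, using the real arc $(t, \sqrt[3]{2-t^3})$ for $t \in [0,1]$.

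Consequently, the endpoints of the edge labelled by $g$ are simply $g(0, \sqrt[3]{2})$ (black) and $g(1,1)$ (white), so every vertex coordinate is obtained by evaluating the $18$ automorphisms in $\langle f_x, f_y\rangle$ at these two base points. Setting $f_x' := f_y f_x f_y : (X,Y) \mapsto (X, \xi Y)$, the full group consists of $f_x^a (f_x')^b$ and $f_x^a (f_x')^b f_y$ for $a,b \in \{0,1,2\}$. The stabilizer of $(0, \sqrt[3]{2})$ is $\langle f_x\rangle$ of order $3$, so its orbit gives the six black vertices $(0, \xi^b \sqrt[3]{2})$ and $(\xi^a \sqrt[3]{2}, 0)$ for $a,b \in \{0,1,2\}$, matching the expected count $18/|x| = 6$. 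The stabilizer of $(1,1)$ is $\langle f_y\rangle$ of order $2$, so its orbit gives the nine white vertices $(\xi^a, \xi^b)$ for $a,b \in \{0,1,2\}$, matching $18/|y|=9$. A sanity check verifies that each of these $15$ points indeed lies on $X^3 + Y^3 = 2$.

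To finish, one walks through the $18$ edges in Figure \ref{Fermat}, reads off the deck element $g$ attached to each, and records its two endpoints $g(0,\sqrt[3]{2})$ and $g(1,1)$, checking consistency with the incidences in Figure \ref{fermatpuntos}. The phrase ``up to an automorphism'' in the statement is exactly the freedom in choosing which edge of $\wmF$ plays the role of $e_0$: a different choice composes every vertex coordinate with an element of $\mathrm{Aut}(C_{\wmF})$. The main obstacle is therefore not conceptual but organizational, namely matching the $15$ computed coordinates to the combinatorial positions in the figure; because $x$ acts locally around each black vertex as the rotation produced by multiplying the matrix label on the left by $\psi(x)$ (and similarly $y$ around each white vertex), this bookkeeping is mechanical once the identification of the previous lemma is in hand.
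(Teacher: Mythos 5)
Your identification of the fifteen affine vertices is correct and follows essentially the same route as the paper: anchor the base edge at the arc joining $(0,\rd)$ and $(1,1)$, then transport these two points by the $18$ automorphisms of $\langle f_x,f_y\rangle$, using the previous lemma to match the rotations in the figure with $f_x$ and $f_y$. Your orbit/stabilizer bookkeeping ($6=18/3$ black vertices $(0,\xi^b\rd)$, $(\xi^a\rd,0)$ and $9=18/2$ white vertices $(\xi^a,\xi^b)$) is just a more systematic version of the paper's ``proceed in this manner to label all the black and white vertices''.

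There is, however, one genuine omission: Figure \ref{fermatpuntos} also labels the three points at infinity $(1:-1:0)$, $(\xi:-1:0)$, $(\xi^2:-1:0)$, i.e.\ the face centres (the fibre over $\infty$), and their identification is part of what this lemma is used for later --- Table \ref{tablapuntos} and the construction of the covering $\mB$ refer to all $18$ marked points of the figure, and the paper's own proof devotes its second paragraph to them. Your method cannot reach these points, because they are not endpoints of any edge and therefore never arise as $g(0,\rd)$ or $g(1,1)$; knowing their orbit under $\langle f_x,f_y\rangle$ is not enough, since one must still anchor a specific face of the picture to a specific point at infinity. The paper does this by a fixed-point argument: $yx$ acts on the curve as $f_y\circ f_x:(X,Y)\mapsto(Y,\xi X)$, whose unique fixed point at infinity is $(\xi:-1:0)$, while on the picture the composite rotation $yx$ fixes exactly the lower right face and swaps the other two; hence that face is $(\xi:-1:0)$ and the remaining two faces are labelled by applying $f_x$. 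You should add this step (or an equivalent anchoring argument) to make the proof complete.
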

\begin{proof}

Knowing that $x$ ($120^\circ$ rotation) on the picture corresponds to $f_x$ on the curve, and similarly for $y$, we can identify all the black and white vertices in Figure \ref{fermatpuntos}: for instance, the white vertices connected to $(0,\rd)$  must be $(\xi,1)$ and $(\xi^2,1)$, since these are the ones obtained by applying $f_x$ to $(1,1)$, which in turn is the same as rotating the figure around the black vertex. We can proceed in this manner to label all the black and white vertices.

Only the points at infinity remain to be identified in the figure. If we take the automorphism $yx$, we have that its action is $f_y\circ f_x:(X,Y)\mapsto (Y,\xi X)$, which fixes the point $(\xi:-1:0)$. On the dessin, it is the rotation around $(0,\rd)$ followed by the rotation around $(1,1)$, which fixes the lower right face and interchanges the other two. Therefore, the lower right face corresponds to $(\xi:1:0)$, and the other two are labelled using the $f_x$ action.
\end{proof}

In order to construct the equations for $\mE_0$, which is a degree $2^4$ ramified covering of $\wmF$, we are going to construct a degree 2 ramified covering of $\wmF$, whose regular cover will be $\mE_0$. Intermediate coverings between $\mE_0$ and $\wmF$ are in bijection with subgroups containing $E_0$ and contained in $\wF$, or subgroups of $\wF/E_0$. We are going to look at a covering we will call $\mB$, corresponding to the index 2 subgroup $\{(a,b):a\in \F_4,b\in \F_2\}$ of $\wF/E_0\cong \F_4^2$. This will correspond to a covering of the curve $C_{\wmF}$ unramified outside of the 18 points marked on Figure \ref{fermatpuntos}. $\wF$ is the profinite completion of the fundamental group of a torus with 18 points removed, so it is a free profinite group in 19 generators. The generators correspond to loops going around each of the 18 points, plus two generators, which are the generators of the fundamental group of an unpunctured torus. These 20 generators don't freely generate $\wF$, as it has rank 19. These generators can be seen as elements in $\Fd$ by the inclusion of $\wF$ into $\Fd$, which we understand well. Using this, we can assign an element of $\wF$ to the loop going around each of the vertices, as seen in Table \ref{tablapuntos}. The third column is the image of the element in $\wF/E_0 \cong \F_4^2$ by the isomorphism $\psi'$ in equation (\ref{isom2}).

\begin{table}
\caption{The points on the elliptic curve, an element of $\Fd$ that corresponds to a loop around them and their images in $\wF/E_0\cong \F_4^2$}
\label{tablapuntos}       
\begin{tabular}{ccc|ccc}
\hline\noalign{\smallskip}
Point & Generator & In $\F_4^2$ & Point & Generator & In $\F_4^2$  \\
\noalign{\smallskip}\hline\noalign{\smallskip}
$(0,\rd)$      & $x^3             $& $(0,1)$     & $(1,1)$         & $y^2            $ & $(1,1)$ \\
$(0,\xi\rd)$   & $(x^3)^{x^y}     $& $(0,\xi^2)$ & $(\xi,1)       $& $(y^2)^{x}      $ & $(\xi^2,1)$ \\
$(0,\xi^2\rd)$ & $(x^3)^{(x^y)^2} $& $(0,\xi)  $ & $(\xi^2,1)     $& $(y^2)^{x^2}    $ & $(\xi,1)$\\
$(\rd,0)     $ & $(x^3)^{y}       $& $(1,0)    $ & $(1,\xi)       $& $(y^2)^{xy}     $ & $(1,\xi^2)$ \\
$(\xi\rd,0)  $ & $(x^3)^{yx}      $& $(\xi^2,0)$ & $(\xi,\xi)     $& $(y^2)^{xyx}    $ & $(\xi^2,\xi^2)$ \\
$(\xi^2\rd,0)$ & $(x^3)^{yx^2}    $& $(\xi,0)  $ & $(\xi^2,\xi)   $& $(y^2)^{xyx^2}  $ & $(\xi,\xi^2) $\\
$(1:-1:0)    $ & $((yx)^6)^{x}    $& $(0,0)    $ & $(1,\xi^2)     $& $(y^2)^{x^2y}   $ & $(1,\xi)$\\
$(\xi:-1:0)  $ & $(xy)^6          $& $(0,0)    $ &$(\xi,\xi^2)    $& $(y^2)^{x^2yx}  $ & $(\xi^2,\xi)$\\
$(\xi^2:-1:0)$ & $(yx)^6          $& $(0,0)    $ & $(\xi^2,\xi^2) $& $(y^2)^{x^2yx^2}$ & $(\xi,\xi)$\\
- & $y^{-1}x^{-1}yxyxyx^{-1}$ & $(1,1)$ & - & $y^{-1}x^{-1}y^{-1}x^{-1}y^{-1}xyx$ & $(\xi^2,1) $\\
\noalign{\smallskip}\hline
\end{tabular}
\end{table}

The last two generators in the table are the generators of the fundamental group of an unpunctured torus. The subgroup $B$ contains the elements whose image in $\F_4^2$ has second component in $\F_2$. In Figure \ref{fermatpuntos}, the points over which $\mB$ is ramified are shown as bigger.

Let us look a the structure of the possible degree 2 coverings of the punctured elliptic curve. They are given by index 2 subgroups of $\wF$, which is a free profinite group with 19 generators. The free generators for $\wF$ can be chosen to be all the generators in Table \ref{tablapuntos} except for $x^3$. An index 2 subgroup of $\wF$ will contain the derived subgroup $[\wF,\wF]$, and therefore it will be a subgroup of $\wF/[\wF,\wF]\cong (\Z/2\Z)^{19}$, which in turn can be determined just by the set of generators contained in it. For the 17 generators that correspond to points on the curve, the loop is contained in the corresponding subgroup if and only if the covering map is unramified over that point. Therefore, if one fixes the ramification, there are four degree 2 coverings, depending on whether the last two generators are contained or not in the corresponding subgroup.

The extension of fields of functions corresponding to $\wmF$ is $\QQ(1-(x^3-1)^2)\subset \QQ(X)[Y]/(X^3+Y^3-2)=\kappa$, and $\mB$ corresponds to a degree 2 extension of $\kappa$, so it is generated by the square root of an element $a$ of $\kappa$. The covering will be ramified over the points where $a$ has odd valuation: therefore, $a$ must have odd valuation precisely at the points $(0,\xi\rd),(0,\xi^2\rd),(1,\xi),(\xi,\xi),(\xi^2,\xi),(1,\xi^2),(\xi,\xi^2),(\xi^2,\xi^2)$. Let us see that there can only be four such rational functions up to squares, which is in agreement with the fact that there are only four coverings with that ramification.

\begin{lemma}
On an elliptic curve $E$, there are only 4 functions with even valuation everywhere, up to squares.
\end{lemma}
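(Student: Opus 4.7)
My strategy is to identify the set of everywhere-even-valuation functions modulo squares with the 2-torsion subgroup of $\Pic^0(E)$, and then to invoke the classical fact that this group has order $4$ for an elliptic curve.

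Concretely, I let $V = \{f \in \QQ(E)^{*} : \div(f) \in 2\,\Div(E)\}/\QQ(E)^{*2}$ and define a map $\phi : V \to \Pic^0(E)[2]$ sending $[f]$ to $[D]$, where $D$ is any divisor with $\div(f) = 2D$. Such a $D$ exists by hypothesis and has degree $0$ since $\div(f)$ does, and its class is $2$-torsion because $2D = \div(f)$ is principal. The map is well-defined on square classes, because replacing $f$ by $fg^2$ merely shifts $D$ by the principal divisor $\div(g)$.

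I next verify that $\phi$ is a bijection. Surjectivity is immediate: any $2$-torsion class has a representative $D$ with $2D$ principal, and choosing $f$ with $\div(f) = 2D$ produces a preimage. For injectivity, if $[D] = 0$ then $D = \div(g)$ for some $g$, so $\div(f/g^2) = 0$ and hence $f = cg^2$ for some nonzero constant $c$. Since we work over the algebraically closed field $\QQ$, $c$ is a square, so $[f] = 0$ in $V$. The only mildly delicate point is this last step, which needs every nonzero constant to be a square; this is precisely why one must work over $\QQ$ rather than a smaller field. Combining everything gives $|V| = |\Pic^0(E)[2]| = 4$, as claimed.
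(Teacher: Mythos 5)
Your proof is correct and is essentially the paper's argument: both identify the group of everywhere-even-valuation functions modulo squares with the $2$-torsion of $\Pic^0(E)\cong E$, which has order $4$. The paper extracts this isomorphism as the $\Tor^1$ term of the sequence $0\to \QQ(E)^\times/\QQ^\times\to\Div^0\to\Pic^0(E)\to 0$ tensored with $\Z/2\Z$, whereas you construct the same isomorphism explicitly and verify well-definedness, injectivity (your appeal to algebraic closedness is exactly the paper's implicit use of $\QQ^\times=(\QQ^\times)^2$), and surjectivity by hand.
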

\begin{proof}
If we let $\QQ(E)^\times$ be the multiplicative group of functions on $E$, and $\Div^0$ be its group of divisors, the fact that $\Pic^0(E)\cong E$ yields a short exact sequence
$$
0\longrightarrow \frac{\QQ(E)^\times}{\QQ^\times} \longrightarrow \Div^0 \longrightarrow \Pic^0(E)\cong E \longrightarrow 0
$$
Tensoring this short exact sequence with $\Z/2\Z$ yields the Tor long exact sequence:
$$
0\longrightarrow \Tor^1(E,\Z/2\Z)\cong(\Z/2\Z)^2 \longrightarrow \frac{\QQ(E)^\times}{(\QQ(E)^\times)^2} \overset{f}{\longrightarrow} \frac{\Div^0}{2\Div^0} \longrightarrow E \longrightarrow 0
$$
The set of functions we are looking for is the kernel of $f$, which by the exactness is isomorphic to $(\Z/2\Z)^2$.
\end{proof}
These functions can be computed by taking one of them, say $\phi$, and then multiplying it by the functions that have even valuation everywhere, to obtain
$$
\phi=\frac{(Y-\xi)(Y-\xi^2)}{(Y-\xi\rd)(Y-\xi^2\rd)};\phi\psi_1;\phi\psi_2;\phi{\psi_1}{\psi_2}
$$Where
$$\psi_1=\frac{X+Y-2}{X+Y-2\xi};\psi_2=\frac{X+Y-2\xi}{X+Y-2\xi^2}
$$In order to decide which of the four functions is the function $a$ such that the extension $\kappa(\sqrt a)/\QQ(1-(X^3-1)^2)$ corresponds to the group $B$, we observe that $B$ satisfies that $xBx^{-1}=B$.
\begin{lemma}
If $B$ is the subgroup of $\Fd$ defined above, we have that $xBx^{-1}=B$.
\end{lemma}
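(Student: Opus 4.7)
The plan is to reduce the statement to a routine check that a specific matrix preserves a specific subspace. Since both $\wF$ and $E_0$ are normal in $\Fd$, conjugation by $x\in\Fd$ descends to an automorphism of the quotient $\wF/E_0$; this is exactly the $x$-action in the $\Fd/\wF$-module structure on $\wF/E_0$. Because $E_0\subset B\subset \wF$, the identity $xBx^{-1}=B$ is equivalent to saying that the subspace $B/E_0 \subset \wF/E_0 \cong \F_4^2$ is $x$-stable.

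The next task is to compute the $x$-action on $\wF/E_0$ explicitly. Using the isomorphism $\Psi$, the module $\wF/(\h_0\cap\h_1\cap\h_2)\cong \F_2\times\F_4^2\times\F_4^2$ sits inside $\F_2^3\times(\F_4^2)^3$, where the ambient $x$-action is trivial on each $\F_2$ factor and given by $\psi(x)=\mat{\xi}{0}{0}{1}$ on each $\F_4^2$ factor. A short verification, using the $\F_4$-linearity of $\psi(x)$, shows that the mixing term $\xi(b_0,c_0)+\xi^2(b_1,c_1)$ in the definition of $\Psi$ is compatible with this action, and the induced $x$-action on the source is simply $(a,(b_0,c_0),(b_1,c_1))\mapsto (a,(\xi b_0,c_0),(\xi b_1,c_1))$. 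Quotienting by $E_0 = \F_2\times\F_4^2\times 0$ projects onto the third factor, so the $x$-action on $\wF/E_0\cong\F_4^2$ is precisely $\psi(x)$.

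Finally, the subspace $B/E_0 = \{(a,b) : a\in\F_4,\, b\in\F_2\}$ is clearly preserved by $\psi(x)$: for any such $(a,b)$, one has $\psi(x)(a,b)=(\xi a, b)$, and $\xi a\in\F_4$ while $b\in\F_2$ is untouched. This completes the proof. The only step that requires any work is the explicit identification of the $x$-action on $\wF/E_0$, i.e.\ checking the compatibility of $\psi(x)$ with the mixing term of $\Psi$; after that, the stability of $\F_4\times\F_2$ under a diagonal matrix over $\F_4$ whose second diagonal entry already lies in $\F_2$ is immediate.
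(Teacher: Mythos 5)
Your proposal is correct and follows essentially the same route as the paper: both identify $B/E_0$ with the index-two submodule of $\wF/E_0\cong\F_4^2$ consisting of vectors whose second coordinate lies in $\F_2$, and observe that the conjugation action of $x$, given by the diagonal matrix $\psi(x)$, fixes that second coordinate and hence preserves the submodule. The only difference is that you spell out why the induced action on $\wF/E_0$ is $\psi(x)$ (compatibility of $\Psi$ with the $\F_4$-linear action), a step the paper leaves implicit.
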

\begin{proof}
By the description of $B$, the isomorphism $\Psi^{-1}$ maps $(B/E_0)$ isomorphically into the submodule $\{(b,c):c\in \F_2\}\subset \F_4^2$. The action of $x$ clearly preserves this submodule, and since the action of $x$ is given by conjugation, we indeed have that $xBx^{-1}=B$.
\end{proof}

Conjugation by $x$ corresponds to applying the transformation $(X,Y)\mapsto (\xi X,Y)$ to the elliptic curve. Out of the four rational functions with odd valuation at the right set of points, only $\phi$ is invariant up to squares by this transformation. Therefore, the field extension corresponding to $B$ is $\kappa(\sqrt \phi)/\QQ(1-(X^3-1)^2)$.

One checks that $B\cap B^{x^y}\cap B^y \cap B^{x^yy}=E_0$. Therefore, the field of functions corresponding to $E_0$ is the composition of the fields obtained from $\kappa(\sqrt \phi)$ when acted on by $1$, $x^y$, $y$ and $x^yy$. We have
$$
\phi_1=\phi=\frac{(Y-\xi)(Y-\xi^2)}{(Y-\xi\rd)(Y-\xi^2\rd)};\phi_2=\phi^{x^y}=\frac{(Y-1)(Y-\xi)}{(Y-\rd)(Y-\xi\rd)};
$$ $$
\phi_3=\phi^y=\frac{(X-\xi)(X-\xi^2)}{(X-\xi\rd)(X-\xi^2\rd)};\phi_4=\phi^{x^yy}=\frac{(X-1)(X-\xi)}{(X-\rd)(X-\xi\rd)}
$$
If we add square roots of all these elements, we obtain the field of functions of the curve $C_{\mE_0}$, as we wished: it is
$$
\QQ(\mE_0)=\frac{\QQ(t)[X,Y,Z_1,Z_2,Z_3,Z_4]}{\left(
t-(1-(X^3-1)^2),X^3+Y^3-2,Z_1^2-\phi_1,Z_2^2-\phi_2,Z_3^2-\phi_3,Z_4^2-\phi_4
\right)}
$$
The field extensions corresponding to $\mE_1$ and $\mE_2$ are obtained by the action of $G_\Q$ on the coefficients of the equations.
\subsection{The field of moduli of the underlying curve}
Let $C=C_{\mE_0}$ be the underlying curve of the dessin $\mE_0$. In this section we prove that, if $\beta_{\mE_0}:C\lra \PP^1$ is the Belyi map corresponding to the dessin $\mE_0$, and $C$ is the underlying curve, then not only the field of moduli of $(C,\beta_{\mE_0})$ is $\Q(\rd)$, but the curve $C$ alone has field of moduli $\Q(\rd)$. By Theorem 4 in \cite{W}, since $C$ is quasiplatonic, then the map $C\to C/\mathrm{Aut}(C)$ is a Belyi
map. Every regular dessin on $C$ must come from a map of the form $C\to C/H$, where $H<\mathrm{Aut}(C)$: therefore, $C$ has a maximal regular dessin, in the sense that there is a unique one of maximum degree.

Let $\sigma\in G_\Q$ such that $\sigma(\rd)\neq \rd$, and suppose that $C^\sigma\cong C$. If this were the case, since $(C,f)\not\cong (C^\sigma,f^\sigma)$, the curve $C$ would have two different regular dessins of degree 288, and therefore it would have a regular dessin of higher degree, correspoding to $C\to C/\mathrm{Aut}(C)$, factoring simultaneously through $f$ and $f^\sigma$.

To prove that this is impossible, we use the results described in \cite{Gi}. Our particular dessin is of type $(6,4,6)$, so it can be seen as the map $C\cong \Gamma\backslash\HH\to \Delta(6,4,6)\backslash\HH\cong \PP^1$, for some $\Gamma \triangleleft \Delta(6,4,6)$, where $\Delta(6,4,6)$ is the triangle group with parameters $(6,4,6)$ acting on $\HH$ in the usual way. The results in \cite{Gi} ensure that the only way that this dessin can be non-maximal is if $\Delta(6,4,6)$ is included with finite index in another triangle group $\Delta'$, and by this inclusion $\Gamma<\Delta'$ is a normal subgroup, so that the dessin $\Gamma\backslash \HH\to  \Delta' \backslash \HH$ is regular.

All finite index embeddings between triangle groups are given in \cite{S}. The only triangle group in which $\Delta(6,4,6)$ can be embedded is $\Delta(6,8,2)$, and this happens in the following way: if $x,y,z$ are the standard generators of $\Delta(6,4,6)$, i.e. so that $x^6=y^4=z^6=xyz=1$, and similarly, $\Delta(6,8,2)$ is generated by $\wt x,\wt y,\wt z$, then the embedding is given by
$$
x=\wt x;y={\wt y}^2;z=\wt z\wt x\wt z
$$
By this embedding, $[\Delta(6,8,2):\Delta(6,4,6)]=2$, and the quotient can be generated by $\wt y$. In order to see that $\Gamma$ is not normal in $\Delta(6,8,2)$, let us look at what happens when conjugating by $\wt y$. First of all, $y^{\wt y}=\wt y^{-1} (\wt y)^2 \wt y=y$. Secondly, since $\wt z=(\wt x \wt y)^{-1}$ has order two, $\wt x\wt y=\wt y^{-1} \wt x^{-1}$, and
$$
x^{\wt y}=\wt y^{-1}\wt x\wt y=\wt y^{-2} \wt x^{-1}=y^{-1}x^{-1}
$$
Now, recall from before that the group $E_0$ contains the element $x^3y^2(x^3y^2)^x(x^3y^2)^{x^2}$. Its image by the conjugation is
$$
\left(x^3y^2(x^3y^2)^x(x^3y^2)^{x^2}\right)^{\wt y}=$$ $$=\left(y^{-1}x^{-1}\right)^3y^2\left((y^{-1}x^{-1})^3y^2\right)^{(y^{-1}x^{-1})}\left((y^{-1}x^{-1})^3y^2\right)^{(y^{-1}x^{-1})^2}
$$
This word has an odd number of $y$'s, therefore it is not contained in the normal subgroup generated by $\{x,y^2\}$. However, this subgroup contains $F$, and therefore $E_0$, so $E_0$ is not normal in $\Delta(6,8,2)$, as we wished to prove. This implies that the maximal regular dessin on $C$ is the one given by $E_0$, so $C\not\cong C^\sigma$, as we wished to prove.

This proves that $C$ is a quasiplatonic curve of genus 61 with field of moduli $\Q(\rd)$. Note that one could use the same technique to prove that $C_{\mathcal H_0}$, the underlying curve of $\mathcal H_0$, also has field of moduli $\Q(\rd)$, since its type is $(6,4,12)$, which is maximal.

\section{Two previous examples}\label{sec:2}
At least two other examples of dessins whose regular cover has non-abelian field of moduli can be found in the literature. The first example is found in \cite{SV}, and it is the family of genus 0 dessins which have passport $(2^2\ 1\ 1,3\ 2\ 1,6)$, shown in Figure \ref{fig:shabat}. It is shown in \cite{SV} that the moduli field of the top dessin in the picture is $\Q(\rd)$, and that they are all Galois conjugate. Associated to the dessins are three subgroups $H_0,H_1,H_2<\Fd$, and their regular covers are associated to the subgroups $\cor_{\Fd}(H_i)$. Since $G_\Q$ acts as automorphisms of $\Fd$, it will permute the groups $\cor_{\Fd}(H_i)$. Therefore, if they are all different, it follows that $\Gal(\Q(\rd,\xi)/\Q)$ acts nontrivially on them.

Suppose that they were not all different: in this case, since the Galois group acts as $S_3$, all of them should be equal. The groups $\Fd/\cor_{\Fd}(H_i)$ are the cartographic groups of the three dessins. These groups are the whole permutation group $S_6$: this can be checked directly. Now, if all the cores are equal, then $S_3\cong\Gal(\Q(\rd,\xi)/\Q)$ must act on the quotient $\Fd/\cor_{\Fd}(H_i)\cong S_6$, and it must permute the subgroups $H_i/\cor_{\Fd}(H_i)$, which are not pairwise conjugate, since they correspond to different dessins. This means that the action of $S_3$ must be by outer automormophisms, but this is impossible, since $\mathrm{Out}(S_6)$ has 2 elements, and thus $S_3$ cannot embed into the group of outer automorphisms. This proves that the regular covers of the dessins are distinct, and thus their fields of moduli are $\Q(\rd)$, $\Q(\xi\rd)$ and $\Q(\xi^2\rd)$.

The other example can be shown to have nonabelian field of moduli by the same reasoning as before: in \cite{R}, the Belyi map called $U_{8,9}^{\mathrm{gen}}$ gives a dessin of genus 0 and passport $(18^39\  1,8\  1^{56},16^4)$. It is shown that there are 35 dessins with this passport, which split into two Galois orbits: one has one element (it is the one corresponding to the Belyi function $U_{8,9}^{\mathrm{gen}}$, and the other has 34 elements. Further, it is proven that the Galois action on the second orbit is by a Galois group isomorphic to $S_{34}$. The cartographic group of these 34 dessins is proven to be $S_{64}$. Doing the same as before, if their regular covers weren't all different, they would be all equal, since the Galois group acts as $S_{34}$, and in particular the action is 2-transitive. If they were all equal, then $S_{34}$ would have to embed into $\mathrm{Out}(S_{64})\cong 1$.

\begin{figure}
\begin{tikzpicture}[line cap=round,line join=round,>=triangle 45,x=1cm,y=0.65cm]
\clip(-1,-2.5) rectangle (7,3);
\draw (1,2)-- (5,2);
\draw (1,0)-- (6,0);
\draw (1,-1)-- (6,-1);
\draw (4,0)-- (4,1);
\draw(4,-1)--(4,-2);
\draw(5,2)--(5.5,2.8);
\draw(5,2)--(5.5,1.2);
\puntonegro{(2,2)}
\puntonegro{(4,2)}
\puntonegro{(5.5,2.8)}
\puntonegro{(5.5,1.2)}
\puntonegro{(1,0)}
\puntonegro{(3,0)}
\puntonegro{(5,0)}
\puntonegro{(1,-1)}
\puntonegro{(3,-1)}
\puntonegro{(5,-1)}
\puntonegro{(4,1)}
\puntonegro{(4,-2)}
\puntoblanco{(1,2)}
\puntoblanco{(3,2)}
\puntoblanco{(5,2)}
\puntoblanco{(2,-1)}
\puntoblanco{(4,-1)}
\puntoblanco{(6,-1)}
\puntoblanco{(2,0)}
\puntoblanco{(4,0)}
\puntoblanco{(6,0)}
\end{tikzpicture}
\caption{The normal cover of these dessins also has nonabelian field of moduli.}
\label{fig:shabat}       
\end{figure}
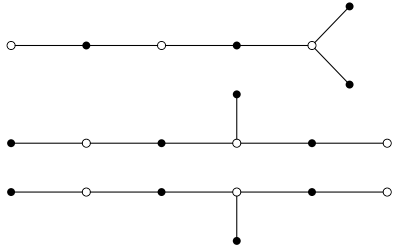


\paragraph*{Acknowledgements} I would like to thank my master's thesis advisor, Andrei Jaikin, for an inmense amount of help and encouragement while I was working on this problem. I would also like to thank the reviewers of this article for many insightful comments which have lead to substantial improvements.



\end{document}